\newtheorem{theorem}{Theorem}[section]
\newtheorem{lemma}[theorem]{Lemma}
\newtheorem{definition}[theorem]{Definition}
\newtheorem{proposition}[theorem]{Proposition}
\newtheorem{remark}[theorem]{Remark}
\newtheorem{example}[theorem]{Example}
\newcommand{\Matrix}[1]{\ensuremath{\left[\begin{array}{ccccccccccccccccccccccccr} #1 \end{array}\right]}}
\newcommand{\ii}{\mathrm{i}}
\newcommand{\Z}{\mathbb{Z}}
\newcommand{\R}{\mathbb{R}}
\newcommand{\D}{\mathbb{D}}
\newcommand{\Sone}{{\bf S}^1}
\newcommand{\ee}{{\mathrm e}}
\newcommand{\beqn}{\begin{eqnarray*}}
\newcommand{\eeqn}{\end{eqnarray*}}
\newcommand{\C}{\mathbb{C}}
\newcommand{\DD}{{\mathrm D}}
\newcommand{\ONE}{{\bf 1}}
\newcommand{\dd}{\mathrm{d}}
\newcommand{\eps}{\varepsilon}
\title{Hopf Bifurcation in Asymmetric Ring Networks: Constraints on Phase Shifts}
\author{Ian Stewart \\ Mathematics Institute
\\ University of Warwick \\ Coventry CV4 7AL
\\ United Kingdom}
\begin{document}

\maketitle

\begin{abstract}
Hopf bifurcation in networks of coupled ODEs
creates periodic states in which the relative phases of
nodes are well defined near bifurcation. When the network is a
fully inhomogeneous nearest-neighbour coupled unidirectional ring,
 and node spaces are 1-dimensional, we derive constraints on these phase shifts that
apply to any ODE that respects the ring topology.
We begin with a 3-node ring and generalise the results
to any number of nodes. The main point is that such constraints exist
even when the only structure present is the network topology.
We also prove that the usual nondegeneracy conditions in the classical
Hopf Bifurcation Theorem are valid generically for ring networks, by perturbing only
coupling terms.
\end{abstract}

\maketitle

\section{Introduction}

Hopf bifurcation is a mechanism by which a steady state of a family of ODEs
becomes unstable and throws
off a periodic cycle \cite{GH83,HKW81}. It occurs when the linearised ODE has purely imaginary eigenvalues,
subject to various nondegeneracy conditions, namely: 
a simple pair of imaginary eigenvalues, nonresonance, and
the eigenvalue crossing condition. Other variations on these conditions are
common in the literature. In particular, nonresonance is usually
replaced by the stronger condition  `no other imaginary eigenvalues'. See Section \ref{S:HBT}.

Periodic states of uni- or bidirectional
rings of identical oscillators have been widely studied; 
see for example\cite{A86,E85}.
Many other references, often using specific oscillator equations
such as van der Pol oscillators, are listed in \cite{S23a}. The role of
the symmetry group of the ring 
 has been made explicit in
\cite{GS86,GS02,GSS88,S23a} using symmetric (or equivariant)
Hopf bifurcation for $n$-node rings of coupled dynamical systems
with cyclic symmetry group $\Z_n$ or dihedral symmetry group $\D_n$. 
These works classify the typical patterns of phase relations that
arise via Hopf bifurcation in such networks, and gives conditions for them to occur.
The underlying ODE is assumed to
be `admissible'; that is, consistent with both the network topology and
the group of symmetries. 
More general network topologies have also been studied in this manner.

In this paper we consider what happens when this symmetry constraint is removed;
that is, when the nodes and arrows of the ring network all have different types. We work in
 the general formalism for network dynamics of \cite{GS23,GST05,SGP03}.
Such networks are said to be {\em fully inhomogeneous}. The symmetry group
is now trivial, and
the definition of an admissible ODE is straightforward: the component
of the ODE for node $c$ must have the form
\[
\dot x_c = f_c(x_c, x_{i_1}, \ldots, x_{i_m})
\]
where $x_i$ is the variable associated with node $i$, the nodes
$i_1, \ldots, i_m$ are the tail nodes of the input arrows to node $c$,
and the $f_c$ are independent functions as $c$ runs over the set of nodes.

Bifurcations in fully inhomogeneous networks have been
studied in \cite{GGPSW19} in the case of certain mode interactions.
It has been proved in \cite{J12,S20rigideq} that
fully synchronous equilibrium states, in which all node variables have the same 
value, cannot occur rigidly. That is, synchrony
cannot persist under small admissible perturbations of a hyperbolic
equilibrium. (However, special constraints, the commonest being to require
$f_c(0,0,\ldots, 0) = 0$ for all $c$, can impose this kind of synchrony.)
Discrete dynamics of coupled map networks is discussed in \cite{AJH05,JAH05,KY10}.
These works aside,
there seem to be few general results about fully inhomogeneous networks.

Here we show that, for any fully inhomogeneous ring network
with 1-dimen\-si\-on\-al node spaces and nearest-neighbour unidirectional coupling,
there are `universal' constraints on relations between the phases
of successive nodes in any periodic state arising via Hopf bifurcation.
We make no synchrony assumptions on the family of equilibria concerned.
The phases are defined at the bifurcation point by the phase
relations of the linearised eigenfunction, and remain approximately valid
sufficiently close to the bifurcation point. We also show that
for these networks the nondegeneracy conditions in the
Hopf Bifurcation Theorem are generic; that is, can be realised
after an arbitrarily small {\em admissible} perturbation of the ODE.
(Section \ref{S:HBT} explains why this statement is not obvious.)
We do not address stability, but the absence of linear degeneracy implies that usual 
exchange of a stability criterion for supercritical and subcritical branches
applies \cite[Chapter 1 Section 4]{HKW81}. 
We expect the necessary cubic order term to be nonzero generically
for a ring network, but have not attempted to prove this.

Even for symmetric rings, it is shown in \cite{S23a} that
such constraints do not apply if longer-range couplings are present
or the node state spaces have higher dimension. These negative results
also apply to fully inhomogeneous networks: just perturb the ODE 
(admissibly) to break the symmetry.

\subsection{Summary of Paper}

Section \ref{S:HBT} reviews the classical Hopf Bifurcation Theorem
and several variants, including one that requires no resonant imaginary
eigenvalues. We point out that in equivariant Hopf bifurcation
such resonances are non-generic, but give an example to show
that in network admissible ODEs this statement can be false.

Section \ref{S:3NFIUR} considers a special but typical example: 
a 3-node network, nearest-neighbour coupled in a unidirectional ring,
with $1$-dimensional node spaces. We
establish explicit conditions for the occurrence of a Hopf bifurcation, 
find the eigenvalues and eigenvectors of the Jacobian, and
observe that relative phase shifts make sense near a Hopf bifurcation point
even when the waveforms being compared are not identical up to time translation.
We compute the phase shifts between successive nodes and
classify the constraints on the quadrants of the unit circle in $\C$
in which they can lie, assuming that the period has been normalised to $2\pi$
by scaling time, and the phases $\theta$ are represented by points
$\ee^{\ii\theta}$ on the unit circle.

Section \ref{S:DRnN} generalises some of these results to $n$-node nearest-neighbour coupled rings.
We prove that simple eigenvalues are generic and nonresonance is generic,
so these hypotheses
of the classical Hopf Bifurcation Theorem are generically valid.
Indeed, only the coupling terms need to be perturbed to ensure simplicity of the imaginary eigenvalue:
the internal dynamics of the nodes can remain unchanged.
The remaining nondegeneracy hypothesis is the eigenvalue crossing
condition, which is obviously generic (if eigenvalues do not
cross the imaginary axis with nonzero speed,
add a term $\alpha x$
for small $\alpha$ to make the eigenvalues cross the imaginary axis with
speed $\alpha$). Thus classical Hopf bifurcation is generic in 
fully inhomogeneous ring networks.

\section{Hopf Bifurcation Theorem}
\label{S:HBT}

Hopf's original bifurcation theorem \cite{H42} has since been generalised in several
ways. He assumed the vector field is analytic; this can be weakened to
$C^\infty$ or even $C^4$; see \cite[Chapter 1 Section 2 Theorem II]{HKW81}.
It is stated in various forms, some more restrictive than others; stronger
hypotheses lead to stronger conclusions.

We state it in the following form:

\begin{theorem}
\label{T:hopf_bif}
Let $f:\R^n\times\R \to \R^n$ be a $C^\infty$ map with a local branch of equilibria $(x(\lambda),\lambda)$; that is,
$f(x(\lambda),\lambda) = 0$ for all $\lambda$ near some point 
$\lambda_0$. Let $x_0=x(\lambda_0)$. Consider the family of
ODEs $\dot x = f(x,\lambda)$ for $(x, \lambda)$ near $(x_0,\lambda_0)$. Suppose
that the derivative $\mathrm{D} f|_{(x_0,\lambda_0)}$ has a conjugate pair of complex
eigenvalues $\sigma(\lambda) + \ii \rho(\lambda)$ such that
$\sigma(\lambda_0) = 0$. Let $\rho(\lambda_0) = \omega \neq 0$ and assume
the following nondegeneracy conditions:

{\rm (1)} {\it Eigenvalue Crossing Condition}: $\frac{\dd}{\dd \lambda}\rho(\lambda) \neq 0$
when $\lambda = \lambda_0$;

{\rm (2)} {\it Simple Eigenvalue Condition}: The eigenvalues $\pm \ii\omega$
of $\DD f|{(x_0,\lambda_0)}$ are simple;

{\rm (3)} {\it Nonresonance Condition}: $\mathrm{D} f|_{(x_0,\lambda_0)}$ has no purely imaginary
eigenvalues $\pm k\ii\omega$ for integer $k> 1$.

Then the branch of equilibria $(x(\lambda),\lambda)$ bifurcates
to a branch of periodic states at $\lambda = \lambda_0$. The period tends to
$T = \frac{2\pi}{|\omega|}$ as $\lambda \to \lambda_0$.
\end{theorem}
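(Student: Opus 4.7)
The plan is to recast the search for bifurcating periodic orbits as a zero problem on a loop space and reduce it via an $\Sone$-equivariant Lyapunov--Schmidt procedure. After a change of coordinates so that the equilibrium branch becomes $x(\lambda)\equiv 0$ and $\lambda_0 = 0$, I would rescale time by $s = \omega t/(1+\tau)$, where $\tau\in\R$ parametrises deviation from the nominal period $T = 2\pi/|\omega|$, and look for $2\pi$-periodic zeros of
\[
F(u,\lambda,\tau) = u'(s) - \frac{1+\tau}{\omega}\, f(u(s),\lambda),
\]
viewed as a smooth map between suitable Banach completions of $C^1_{2\pi}(\R,\R^n)$ and $C^0_{2\pi}(\R,\R^n)$. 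Any nontrivial branch with $(u,\tau)\to 0$ will then give periodic orbits of the original ODE with periods $(1+\tau)T$ tending to $T$, which is exactly the claim.

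Next I would analyse the linearisation $L u = u' - \omega^{-1}\DD f|_0 u$ by Fourier expansion, so that $Lu = 0$ reduces to $\ii k\omega\,u_k = \DD f|_0 u_k$ for each $k\in\Z$. Hypothesis (3) rules out all $|k|\ge 2$; hypothesis (2) pins the $k=\pm 1$ part of $\ker L$ down to the two-real-dimensional plane spanned by $\ee^{\pm\ii s}v$ and $\ee^{\mp\ii s}\bar v$ for a $\ii\omega$-eigenvector $v$ of $\DD f|_0$; the $k=0$ mode corresponds to motion along the equilibrium branch $x(\lambda)$ and is handled by the decomposition rather than contributing to the bifurcating branch. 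Standard results for constant-coefficient differential operators on periodic Sobolev spaces show that $L$ is Fredholm of index zero, and the circle group $\Sone$ acts on the loop space by phase shift $u(s)\mapsto u(s+\phi)$, commuting with $F$ and acting on $\ker L\cong\C$ as the standard rotation.

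With this in place I would carry out an $\Sone$-equivariant Lyapunov--Schmidt reduction. The complementary infinite-dimensional part is solved by the Implicit Function Theorem, and equivariance forces the reduced map $g:\C\times\R\times\R\to\C$ to commute with the standard $\Sone$-action on $\C$, hence to take the form $g(z,\lambda,\tau) = z\,h(|z|^2,\lambda,\tau)$ for some smooth $\C$-valued $h$. Nontrivial periodic branches near the origin correspond to zeros of $h$. A routine perturbation calculation for the simple eigenvalue $\sigma(\lambda)+\ii\rho(\lambda)$ then yields the partial derivatives of $h$ at the origin; the eigenvalue-crossing hypothesis (1) makes the $\R$-linear derivative of $h$ in $(\lambda,\tau)$ at $(0,0,0)$ invertible as a map $\R^2\to\C$. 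The Implicit Function Theorem then produces unique smooth curves $\lambda(z),\tau(z)$ with $\lambda(0)=\tau(0)=0$, giving the bifurcating branch of periodic orbits whose periods tend to $T$ as $z\to 0$.

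The main obstacle, in my view, is not any single computation but the functional-analytic bookkeeping: verifying the Fredholm property of $L$ on the chosen spaces and, crucially, arranging the Lyapunov--Schmidt projections $\Sone$-equivariantly so that $g$ actually acquires the normal form $z\,h(|z|^2,\lambda,\tau)$ rather than merely containing such a piece. Once that is secured, the eigenvalue-crossing computation and the final IFT step are routine, and the period statement drops out immediately from $\tau(0) = 0$.
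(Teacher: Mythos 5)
Your proposal follows essentially the route the paper itself indicates: the paper does not prove Theorem \ref{T:hopf_bif} (it is the classical result, cited to \cite{H42,HKW81,H93,HMO84}), but the discussion immediately after the statement sketches precisely your argument --- pass to loop space, observe via Fourier modes that conditions (2) and (3) cut the kernel down to the two-dimensional space $E_{\pm\ii\omega}$, perform an $\Sone$-equivariant Liapunov--Schmidt reduction so the reduced map has the form $z\,h(|z|^2,\lambda,\tau)$, and solve using the crossing condition (1). The one genuinely shaky step in your sketch is the $k=0$ mode: constant loops lie in $\ker L$ exactly when $0$ is an eigenvalue of $\DD f|_{(x_0,\lambda_0)}$, and hypothesis (3) as stated (integer $k>1$) does not exclude this; the mere existence of the equilibrium branch $x(\lambda)$ does not remove those kernel directions, so your claim that the $k=0$ mode is ``handled by the decomposition'' is not an argument. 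The paper is aware of the issue --- Example \ref{ex:5node_resonant} explicitly counts zero eigenvalues as resonances because they ``contribute to the kernel of the operator'' --- so to make your proof complete you should either add $0\notin\mathrm{spec}\,\DD f|_{(x_0,\lambda_0)}$ as an (implicit) hypothesis or treat the $k=0$ contribution separately.
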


(After a $\lambda$-dependent translation of
 coordinates we may assume that $\lambda_0 = 0$ and $x(\lambda) = 0$
for $\lambda$ near $\lambda_0$.)

In \cite{H42} and \cite{HKW81} condition (3) is replaced by `the remaining eigenvalues
have strictly negative real parts'. In particular, this condition implies that there are no other
eigenvalues on the imaginary axis. This condition is necessary (though not always sufficient) for
a stable periodic branch to occur, but it can be weakened if only the {\em existence}
 of the branch
is being proved. It is enough for the remaining eigenvalues to
lie off the imaginary axis; see \cite{H93,HMO84,HMO02}. 
But even this condition can be weakened, as we now explain.

The method of \cite{H93,HMO84} replaces the ODE by an operator
equation on `loop space', the space of $T$-periodic functions $\R \to \R^n$ with
a suitable norm. It then uses Liapunov-Schmidt reduction to define a
`reduced function' from the kernel of the operator to the range, whose
zeros are in one-to-one correspondence with periodic states of the ODE.
The kernel and range can be identified and are finite-dimensional.
In this approach, when $T = 2\pi/|\omega|$, 
the kernel is the sum of the imaginary eigenspaces $E_{\pm k\ii\omega}$
for integers $k$. (This sum is needed because a state with period $T/k$ also has 
period $T$.)
Condition (3) ensures that the kernel is just $E_{\pm\ii\omega}$, which
is 2-dimensional by condition (2). It is then
possible to prove that the reduced equation then has solution branches
provided condition (1) holds. Indeed, this version follows from
the Equivariant Hopf Theorem of \cite{GS85a,GSS88} when the symmetry group
of the ODE is the trivial group $\ONE$, because the circle group of phase shifts
splits off the resonant eigenvalues.

More generally, the same loop space approach is used to prove the Equivariant Hopf
Theorem of \cite{GS85a,GSS88}. There is is proved that in the presence
of a symmetry group $\Gamma$, the imaginary eigenspace is
generically a $\Gamma$-simple representation. This is the analogue
for equivariant ODEs of the condition that the remaining eigenvalues
should be off the imaginary axis. The proof that this condition is generic involves perturbing the
ODE by a small scalar multiple of the projection map onto any other
irreducible component.
Again, the nonresonance condition (3) ensures that the kernel is 
just $E_{\pm\ii\omega}$, which leads to a   
version of the Equivariant Hopf Theorem that is slightly more general
than the one stated in \cite{GS85a,GSS88}, in which 
the absence of other imaginary eigenvalues is replaced by the
absence of other $k:1$ resonant imaginary eigenvalues.
Indeed, a further refinement is also possible: if a subgroup 
$\Sigma \subseteq \Gamma \times \Sone$ has a
$2$-dimensional fixed-point subspace when restricted to the space of
eigenvectors $\sum_{k \in \Z} E_{\pm k \ii \omega}$, and this
subspace is contained in $E_{\pm \ii \omega}$, then the conclusions
of the Equivariant Hopf Theorem remain valid, with essentially the same proof.

However, this technique need not be
valid for an admissible ODE of a $\Gamma$-symmetric network. The reason is that
equivariant maps for a network with symmetry group $\Gamma$ 
need not be admissible. Specifically, the projection map onto an
 irreducible component, which is crucial to the proof of the
basic theorem that
generically the critical eigenspace for
an imaginary eigenvalue is $\Gamma$-simple,
need not be admissible for the network. 
This remark applies even when the symmetry group is the trivial group $\ONE$,
and it is why we impose the nonresonance condition (3) as well as (2).

We give two examples  to illustrate the need for the 
nonresonance condition. Both of them use 
homogeneous (indeed regular) networks, where the admissibility condition is 
much stronger than in the inhomogeneous case. In contrast, Theorem \ref{T:nores}
below shows that resonances in fully inhomogeneous $n$-node 
unidirectional rings with
nearest-neighbour connections can be removed by admissible perturbations.

\begin{example}\em
\label{ex:5node_resonant}
Figure \ref{F:5node_resonant} (left) shows a regular transitive 5-node network
of valence (or in-degree) $3$; that is, each node has three input arrows.
`Transitive' means that any two nodes are joined by a directed path \cite{AGU72}.
 Its adjacency matrix is 
\[
A = \Matrix{1 & 1& 0&1&0 \\ 1&1&0&0&1 \\ 0&2&0&0&1 \\ 0&1&1&0&1 \\
1& 0& 1& 0& 1}
\]
with the convention that the entry $a_{ij}$ is the
number of arrows from node $j$ to node $i$. (In the graph theory literature
it is usual to define the adjacency matrix so that $a_{ij}$ is the
number of arrows from node $i$ to node $j$.)
The eigenvalues of $A$ (with either convention) are $3, \ii, -\ii, 0, 0$. 
The family of linear admissible ODEs
\[
\dot x = (\lambda I + J)x
\]
has a $0:1$ resonance at $\lambda = 0$ that cannot be removed by any
 admissible perturbation.

Zero eigenvalues are normally associated with steady state branches,
but they still count as resonances in this context because they
also contribute to the kernel of the operator.

\begin{figure}[h!]
\centerline{%
\includegraphics[width=0.3\textwidth]{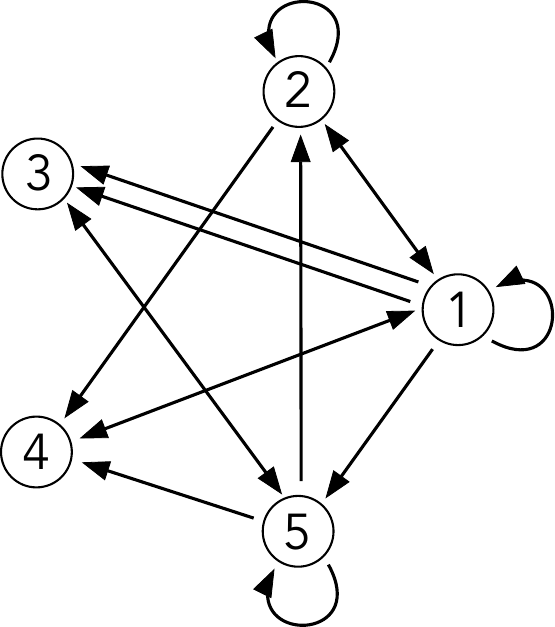}\qquad
\includegraphics[width=0.35\textwidth]{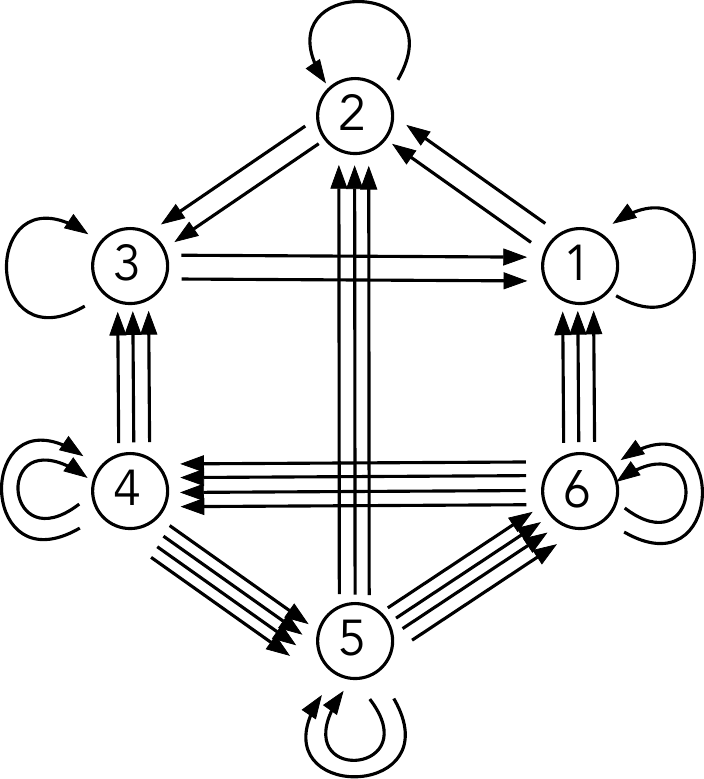}
}
\caption{{\em Left}: Regular transitive 5-node network with $0:1$ resonance.
{\em Right}: Regular 6-node network with $2:1$ resonance.}
\label{F:5node_resonant}
\end{figure}

Figure \ref{F:5node_resonant} (right) shows a regular 6-node network
of valence (or in-degree) $6$. It is not transitive: nodes 
$1,2,3$ have inputs from nodes $4,5,6$, but 
nodes $4,5,6$ have no inputs from nodes $1,2,3$.
It has two transitive components $\{1,2,3\}$ and $\{4,5,6\}$.
Its adjacency matrix is 
\[
A = \Matrix{1 & 0& 2&0&0& 3\\ 2&1&0&0&3 &0 \\ 0&2&1&3&0 & 0 \\ 0&0&0&2&0&4 \\
0&0&0&4&2&0 \\ 0&0&0&0&4&2
}
\]
The eigenvalues are $3,6, \pm \ii\sqrt{3}, \pm 2\ii\sqrt{3}$,
a $2:1$ resonance. Again, no admissible perturbation can
remove this resonance.
\end{example}

\section{3-Node Fully Inhomogeneous Unidirectional Ring}
\label{S:3NFIUR}
 
We begin with a simple example: a 3-node network coupled in 
a unidirectional ring, Figure \ref{F:3ring_inhomog}. We assume that
each node has a 1-dimensional state space $\R$. In Section \ref{S:DRnN} we generalise
the results to $n$-node rings, still with 1-dimensional node spaces.

\begin{figure}[htb]
\centerline{\includegraphics[width=0.2\textwidth]{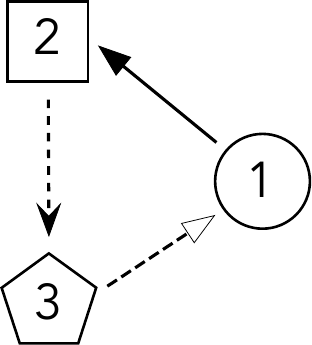}
}
\caption{3-node fully inhomogeneous unidirectional ring.}
\label{F:3ring_inhomog}
\end{figure}

Admissible ODEs for this network take the general form
\beqn
\dot{x_1} &=& f_1(x_1, x_2) \\
\dot{x_2} &=& f_2(x_2, x_3) \\
\dot{x_3} &=& f_3(x_3, x_1) 
\eeqn
and the Jacobian (evaluated at any point) therefore has the form
\begin{equation}
\label{e:jac}
J = \Matrix{a_1&b_1&0\\0&a_2&b_2\\b_3&0&a_3}
\end{equation}
where
\beqn
&& a_1 = \partial_1 f_1(x_1,x_2) \qquad a_2 = \partial_1 f_2(x_2,x_3) \qquad a_3 = \partial_1 f_3(x_3,x_1) \\
&& b_1 = \partial_2 f_1(x_1,x_2) \qquad b_2 = \partial_2 f_2(x_2,x_3) \qquad b_3 = \partial_2 f_3(x_3,x_1) 
\eeqn
and $\partial_i$ indicates the partial derivative with respect to the $i$th variable.

\subsection{Hopf Conditions}

Suppose there is a Hopf bifurcation at some point.
If so, the eigenvalues of $J$ in~\eqref{e:jac}, evaluated at
that point, are $\tau, i \omega, -\ii \omega$,
where $\tau, \omega \in \R$ and $\omega > 0$.
Take the trace:
\begin{equation}
\label{E:trace}
\tau = a_1+a_2+a_3
\end{equation}
There are two obvious expressions for (minus) the characteristic polynomial:
\beqn
(x^2+\omega^2)(x-(a_1+a_2+a_3)) &=& x^3-(a_1+a_2+a_3)x^2 + \omega^2 x - (a_1+a_2+a_3)\omega^2 \\
\det(xI-J) &=& x^3-(a_1+a_2+a_3)x^2 + (a_1a_2+a_1a_3+a_2a_3) x  \\
&& \qquad - (a_1a_2a_3+b_1b_2b_3)
\eeqn
Therefore
\begin{equation}
\label{E:omega_squared}
\omega^2 = (a_1a_2+a_1a_3+a_2a_3) = \frac{a_1a_2a_3+b_1b_2b_3}{a_1+a_2+a_3}
\end{equation}
(We show that $a_1+a_2+a_3 \neq 0$ in Remark~\ref{R:marg}.)
Necessary and sufficient conditions for Hopf bifurcation are therefore:
\begin{eqnarray}
\label{e:prodeq1} a_1a_2+a_1a_3+a_2a_3 &>& 0 \\
\label{e:prodeq2}  (a_1+a_2)(a_1+a_3)(a_2+a_3)&=&  b_1b_2b_3
\end{eqnarray}
because 
\[
(a_1a_2+a_1a_3+a_2a_3)(a_1+a_2+a_3)-a_1a_2a_3 =  (a_1+a_2)(a_1+a_3)(a_2+a_3)
\]
Later we also consider the condition
\begin{equation}
\label{E:first}
a_1+a_2+a_3 < 0
\end{equation}
which ensures that
the Hopf branch is the first local bifurcation; 
this is a necessary
 condition for it to be stable. However, this condition need not be sufficient for stability,
 since this depends on higher-order terms in the ODE \cite{GSS88}.

\begin{remark}\rm
\label{R:marg}
The `marginal case' $a_1+a_2+a_3 = 0$ implies that
\[
0 = (a_1+a_2+a_3)^2 = a_1^2+a_2^2+a_3^2 + 2(a_1a_2+a_1a_3+a_2a_3)
\]
contradicting $a_1a_2+a_1a_3+a_2a_3 > 0$. So $a_1+a_2+a_3 \neq 0$.

\end{remark}

\begin{remark}\rm
\label{R:prodidentity}
The characteristic equation (multiplied by $-1$) can also be written as
\begin{equation}
\label{e:chareeq}
(x-a_1)(x-a_2)(x-a_3)-b_1b_2b_3 = 0
\end{equation}
Since $\ii \omega$ satisfies the characteristic equation,
\begin{equation}
\label{e:omegaprod}
(\ii \omega - a_1)(\ii \omega - a_2)(\ii \omega - a_3) = b_1b_2b_3
\end{equation}
\end{remark}

\noindent
This implies:
\begin{proposition}
\label{p:bneq0}
The $b_j$ are nonzero.
 \end{proposition}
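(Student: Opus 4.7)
The plan is to derive a contradiction directly from the product identity in Remark~\ref{R:prodidentity}. Assume for contradiction that some $b_j = 0$; then the right-hand side $b_1 b_2 b_3$ of \eqref{e:omegaprod} vanishes, so at least one factor on the left-hand side must vanish, meaning $\ii \omega = a_k$ for some $k \in \{1,2,3\}$.

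The contradiction then comes from examining the imaginary parts. Each $a_k$ is real (being a partial derivative of a real-valued coordinate function $f_k$ evaluated at a real point), whereas $\ii \omega$ is purely imaginary with $\omega > 0$ by the setup of the Hopf bifurcation. Therefore $\ii \omega - a_k$ has nonzero imaginary part $\omega$, so cannot be zero. This rules out the assumption and forces $b_1 b_2 b_3 \neq 0$, hence each individual $b_j \neq 0$.

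There is really no obstacle here; the statement is essentially immediate once \eqref{e:omegaprod} has been recorded, and the only thing worth stressing in the write-up is that $\omega > 0$ (already guaranteed by the standing Hopf hypothesis) is what makes the factors $\ii \omega - a_k$ nonzero. In particular, one does not need to invoke the two Hopf conditions \eqref{e:prodeq1}--\eqref{e:prodeq2} separately, nor Remark~\ref{R:marg}: the bare fact that a purely imaginary nonzero number cannot equal a real one suffices.
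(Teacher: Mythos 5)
Your argument is exactly the paper's: both use the identity \eqref{e:omegaprod} to conclude that a vanishing $b_j$ would force $\ii\omega = a_k$ for some $k$, contradicting the fact that $a_k$ is real while $\omega > 0$. The proposal is correct and just spells out the same one-line contradiction in slightly more detail.
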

 
 \begin{proof}
By~\eqref{e:omegaprod}, if any $b_j$ is zero, $\ii \omega$ equals some $a_j$,which is real.
This contradicts $\omega > 0$.
\end{proof}

This proposition is used later when we divide by $b_j$.

Equation~\eqref{e:chareeq} holds when $x = a_1+a_2+a_3$,
since this is also an eigenvalue by \eqref{E:trace}, 
which also implies~\eqref{e:prodeq2}.

\subsection{Eigenstructure}

We describe the eigenstructure of $J$. Let
\[
\tau = a_1+a_2+a_3
\]

\begin{theorem}
\label{T:eigenstructure_J}
The eigenvalues of $J$
are the real eigenvalue $\tau = a_1+a_2+a_3$ and the imaginary pair
$\pm\ii \omega$, where 
\begin{equation}
\label{E:omega_eq}
\omega = \sqrt{a_1a_2+a_1a_3+a_2a_3}
\end{equation}
where we take the positive square root to make $\omega > 0$.

\begin{enumerate}
\item[\rm (1)]
The real eigenvalue $\tau$ has eigenvector 
\[
[1, \frac{\tau-a_1}{b_1}, \frac{\tau-a_1}{b_1}\ \frac{\tau-a_2}{b_2}]^\mathrm{T}
\]
\item[\rm (2)]
The imaginary eigenvalue $ \ii \omega$ has eigenvector
\begin{equation}
\label{e:iomegaeigenvector}
[1, \frac{\ii \omega-a_1}{b_1}, \frac{\ii \omega-a_1}{b_1}\ \frac{\mathrm{i}\omega-a_2}{b_2}]^\mathrm{T}
\end{equation}
\item[\rm (3)]
The imaginary eigenvalue
$-\ii \omega$ has the complex conjugate eigenvector
\[
[1, \frac{-\mathrm{i}\omega-a_1}{b_1}, \frac{-\ii \omega-a_1}{b_1}\ \frac{-\ii \omega-a_2}{b_2}]^\mathrm{T}
\]
\end{enumerate}
\end{theorem}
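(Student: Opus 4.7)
The plan is to verify the three items by direct computation: for each candidate eigenvalue $\mu \in \{\tau, \ii\omega, -\ii\omega\}$, solve $(J-\mu I)v=0$ using the explicit form of $J$ in \eqref{e:jac} and check that the stated vector works. The preceding discussion has already identified the eigenvalues (via \eqref{E:trace} and \eqref{E:omega_squared}), so the only remaining content is the eigenvector formulas.

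First I would write out the three scalar equations coming from $(J-\mu I)v=0$ with $v=[v_1,v_2,v_3]^{\mathrm T}$:
\begin{eqnarray*}
(a_1-\mu)v_1 + b_1 v_2 &=& 0 \\
(a_2-\mu)v_2 + b_2 v_3 &=& 0 \\
b_3 v_1 + (a_3-\mu)v_3 &=& 0
\end{eqnarray*}
Proposition~\ref{p:bneq0} gives $b_1,b_2,b_3\neq 0$ (valid for $\mu=\pm\ii\omega$; for $\mu=\tau$ the same nonvanishing is used, and in fact we may take $b_1,b_2\neq 0$ as a consequence of the Hopf conditions). Normalising $v_1=1$, the first equation forces $v_2=(\mu-a_1)/b_1$ and the second then forces $v_3 = (\mu-a_2)v_2/b_2 = \frac{\mu-a_1}{b_1}\cdot\frac{\mu-a_2}{b_2}$. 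This reproduces the vectors stated in (1), (2), (3) once we substitute $\mu=\tau$, $\mu=\ii\omega$, and $\mu=-\ii\omega$ respectively.

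Next, I would check that the third equation $b_3 v_1 + (a_3-\mu)v_3=0$ is automatically satisfied. Substituting the values found gives
\[
b_3 + (a_3-\mu)\,\frac{(\mu-a_1)(\mu-a_2)}{b_1b_2} = 0,
\]
equivalently $(\mu-a_1)(\mu-a_2)(\mu-a_3) = b_1b_2b_3$, which is precisely the characteristic equation \eqref{e:chareeq} rewritten in Remark~\ref{R:prodidentity}. Since $\tau$ and $\pm\ii\omega$ are by construction roots of the characteristic polynomial, this identity holds in all three cases, so the third row is redundant and $v$ is indeed a nonzero eigenvector.

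Finally, for (3), I would observe that $J$ is a real matrix, so complex-conjugating the eigenvector equation for $\ii\omega$ gives an eigenvector for $-\ii\omega$ directly; substituting $\mu=-\ii\omega$ into the formula from (2) yields the expression in (3). There is no real obstacle here: the only thing one has to be careful about is that the divisions by $b_1$ and $b_2$ are legitimate, which is guaranteed by Proposition~\ref{p:bneq0} for the imaginary eigenvalues and by the Hopf nondegeneracy assumption for the real eigenvalue $\tau$.
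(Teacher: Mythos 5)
Your proposal is correct and follows essentially the same route as the paper: solve the linear system $(J-\mu I)v=0$ row by row after normalising $v_1=1$, divide by the $b_j$ (nonzero by Proposition~\ref{p:bneq0}), and use the product identity $(\mu-a_1)(\mu-a_2)(\mu-a_3)=b_1b_2b_3$ from the characteristic equation (i.e.\ \eqref{e:prodeq2} for $\mu=\tau$ and \eqref{e:omegaprod} for $\mu=\ii\omega$) to handle the remaining row, then conjugate for $-\ii\omega$. The only cosmetic difference is that you treat the third row as the redundant one while the paper solves it directly and reconciles it with the second; this is immaterial.
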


\proof
Equation \eqref{E:omega_eq} follows from \eqref{E:omega_squared}.

To find the eigenvector for $\tau$, solve
\[
J\Matrix{1\\y\\z} = \tau\Matrix{1\\y\\z}
\]
That is,
\beqn
a_1+b_1y &=& \tau \\
a_2y+b_2z&=& \tau y \\
b_3+ca_3z &=& \tau z 
\eeqn
Since $b_j \neq 0$ by By Proposition \ref{p:bneq0}, 
\beqn
y &=& \frac{\tau-a_1}{b_1} \\
z &=& \frac{b_3}{\tau-a_3} = \frac{\tau-a_1}{b_1}\ \frac{\tau-a_2}{b_2}
\eeqn
using~\eqref{e:prodeq2}.

A similar calculation applies to $\ii \omega$ using~\eqref{e:omegaprod}. 
Then take the complex conjugate.
\qed

Obviously all three eigenvalues are simple, so one of the main hypotheses
of the classical Hopf Bifurcation Theorem holds. 
The other nondegeneracy hypothesis is the eigenvalue crossing condition,
which depends on how $\lambda$ occurs in the ODE.

We now focus on the imaginary eigenvalue $\ii\omega$, because 
its eigenvector controls the phase shifts in the linearised eigenfunction.

\subsection{Scaling Time}
\label{S:ST}

It is possible to scale time $t$ so that $\omega = 1$.
To do so, let $s= \omega t$. Then
\[
\frac{\dd x}{\dd s} = \frac{1}{\omega} \frac{\dd x}{\dd t} = \frac{1}{\omega}f(x)
\]
and the eigenvalues of $\frac{1}{\omega}\mathrm{D}_xf(x_0)$
are $\frac{1}{\omega}$ times those of $\mathrm{D}_xf(x_0)$; that is,
$\frac{\tau}{\omega}, \ii$, and $-\ii$. Renaming $\frac{\tau}{\omega}$ as $\tau$
we may assume that $\omega = 1$.

The period at the Hopf point is then $2\pi$. The group $\Sone = \R/T\Z$
of time translations can then be identified with the unit circle in the  complex
plane, with translation by $\theta$ (mod $T$) identified with $\ee^{\ii \theta}$.

In the sequel we sometimes work with an arbitrary $\omega$,
but when it is more convenient we normalise $\omega$ to $1$ in this manner.

\subsection{Phase Shifts}

We recall our convention for the term `phase shift', relating
waveforms that are identical except for a time-translation.
Here we scale the period to $2\pi$.

\begin{definition}\em
\label{D:phaseshift}
Suppose that $\rho(t)$ is a $2\pi$-periodic function. Let $\theta \in \Sone = \R/2\pi\Z$. Then 
\begin{equation}
\label{E:phase_shift}
\rho(t - \theta)
\end{equation}
is $\rho(t)$ {\em phase-shifted by} $\theta$. 
Also, $\theta$ is the
{\em phase shift from} $\rho(t)$ {\em to} $\rho(t-\theta)$.
To obtain a unique value it is convenient to normalise $\theta$ to lie in $[0,2\pi)$.
\end{definition}

In this definition $\rho(t)$ and $\rho(t-\theta)$ have identical waveforms
except for the phase shift.
In some circumstances the term `phase shift' can meaningfully
be applied to waveforms that are
not identical, for example by comparing the times at which the signals
take their maximum value (provided that this is unique modulo the period).
In particular, near a Hopf bifurcation point we can derive
asymptotic phase (and amplitude) relations, where `asymptotic'
refers to convergence towards the bifurcation point \cite{LG04,LG06}. This is done by
considering the phases and relative amplitudes of the {\em linearised eigenfunctions},
also called {\em modes} in the physics and engineering communities. 
These are linear combinations of the trigonometric functions $\sin t$ and $\cos t$,
defined on the critical eigenspace, and can always be written as
$a \cos(t+\theta)$ with $a > 0$ and $\theta \in \Sone$.
Here $a$ is the {\em amplitude}; with our sign convention, 
$-\theta$ is the {\em relative phase}
(compared to $\cos t$).

Near a Hopf bifurcation point with eigenvalues $\pm \ii$ and linearised
eigenfunction $a \cos(t+\theta)$ with $a > 0$ and $\theta \in \Sone$,
the periodic orbit guaranteed by the Hopf Bifurcation Theorem has:

\quad\quad period near $2\pi$,

\quad\quad amplitude near $a$,

\quad\quad relative phase near $-\theta$.

\noindent
Here `near' means convergence as the bifurcation 
parameter tends to the bifurcation point.
The (complex) linearised eigenfunction takes the form
\[
\phi(t) = e^{\ii t} u
\]
where $u$ is an eigenvector for the eigenvalue $\ii$.
The real eigenfunctions are the real and imaginary parts of this.

Suppose that in polar coordinates
\[
u_j = r_je^{{\ii}\psi_j}
\quad
\mbox{so that}
\quad
r_j = |u_j| \qquad
\psi_j = \arg u_j
\]
with the convention that $0 \leq \arg z < 2\pi$. Then
\[
\phi_j(t) =e^{\ii t} u_j = r_je^{\ii(\psi_j+ t)}
\]

\begin{lemma}
The phase shift from $\phi_j(t)$  to $\phi_k(t)$ is 
\begin{equation}
\label{e:j-k_arg}
\psi_j-\psi_k = 2\pi - \arg\frac{\psi_k}{\psi_j}
\end{equation}
\end{lemma}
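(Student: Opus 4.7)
The plan is a direct computation: read off the phase shift from the polar form of the eigenfunctions displayed just above the lemma, and then repackage the answer in the $\arg$ notation used on the right-hand side.

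First I substitute the polar form $\phi_j(t) = r_j e^{\ii(\psi_j + t)}$ into Definition \ref{D:phaseshift}. That definition asks for the value $\theta$ for which $\phi_k(t) = \phi_j(t-\theta)$ as waveforms (up to a positive amplitude, which here is the ratio $r_k/r_j$). Matching the exponentials in $\phi_j(t-\theta) = r_j e^{\ii(\psi_j + t - \theta)}$ against $\phi_k(t) = r_k e^{\ii(\psi_k + t)}$ gives $\psi_j - \theta \equiv \psi_k \pmod{2\pi}$, hence $\theta \equiv \psi_j - \psi_k \pmod{2\pi}$.

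Second, I rewrite this in the form appearing on the right-hand side. Since $u_k/u_j = (r_k/r_j)\, e^{\ii(\psi_k - \psi_j)}$, one has $\arg(u_k/u_j) \equiv \psi_k - \psi_j \pmod{2\pi}$, so with the convention $\arg \in [0, 2\pi)$ fixed in the paper, $2\pi - \arg(u_k/u_j)$ agrees with $\psi_j - \psi_k$ modulo $2\pi$ and lies in the normalisation interval required by Definition \ref{D:phaseshift}. (I am reading the ratio on the right-hand side as the ratio of complex eigenvector components $u_k/u_j$, which is the only interpretation under which the formula is meaningful, since $\psi_j,\psi_k$ are real.)

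The main (and essentially only) technical point is the $2\pi$-modular bookkeeping. The raw difference $\psi_j - \psi_k$ lies in $(-2\pi, 2\pi)$ and can be negative, whereas the convention in Definition \ref{D:phaseshift} requires the phase shift to be chosen in $[0, 2\pi)$. The expression $2\pi - \arg(u_k/u_j)$ automatically selects the correct representative, which is presumably why the lemma is phrased that way. The boundary case $\psi_j = \psi_k$, equivalently $u_k/u_j$ real and positive, is handled by identifying $\theta = 0$ with $\theta = 2\pi$ in $\Sone$; no further substantive computation is needed.
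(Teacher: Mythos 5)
Your proof is correct and follows essentially the same route as the paper's: read off the phase from the polar form of $\phi_j(t)=r_j e^{\ii(\psi_j+t)}$ and convert $\psi_j-\psi_k$ into $2\pi-\arg(u_k/u_j)$ using the stated convention $\arg\in[0,2\pi)$. You are also right that the ratio $\psi_k/\psi_j$ in the statement is a notational slip for $u_k/u_j$ (equivalently $\phi_k(t)/\phi_j(t)$) --- the paper's own proof uses it in exactly that sense --- and your explicit matching of exponentials and $[0,2\pi)$ bookkeeping is, if anything, slightly more careful than the original.
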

\begin{proof}
By Definition~\ref{D:phaseshift} the phase shift from $\phi_j(t)$  to
$\phi_k(t)$ is $\psi_j-\psi_k$. Now
\beqn
\phi_j(t) &=& e^{\ii  t} u_j = r_je^{\ii (\psi_j+ t)} \\
\phi_k(t) &=& e^{\ii  t} u_k = r_ke^{\ii (\psi_k+ t)}
\eeqn
Therefore
\[
\frac{\psi_k}{\psi_j} = \frac{r_k}{r_j}e^{\ii  (\psi_k-\psi_j)}
\]
Therefore
\[
\psi_k-\psi_j = \arg \frac{\psi_k}{\psi_j} \qquad \mbox{so}\qquad \psi_j-\psi_k = 2\pi-\arg \frac{\psi_k}{\psi_j}
\]
proving~\eqref{e:j-k_arg}.
\end{proof}

To compute the phase shifts from once node to the next for Hopf
bifurcation  in the 3-node ring, we need the ratios of successive 
entries in the eigenvector. These are given by:

\begin{lemma}
\label{L:ratios}
A vector $u=[u_1,u_2,u_3]^\mathrm{T}$ is an eigenvector for eigenvalue $\ii$
if and only if all $u_j \neq 0$ and $u$
satisfies the symmetric system of equations
\[
\frac{u_{j+1}}{u_j} = \frac{\ii -a_j}{b_j} \quad j=1,2,3 \pmod{3}
\]
\end{lemma}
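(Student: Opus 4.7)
The plan is to recognize this lemma as essentially a rewriting of the eigenvalue equation $(J-\ii I)u=0$, componentwise, with the only subtlety being the nonvanishing of the entries $u_j$ and the consistency of the cyclic system.

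First, I would expand $(J-\ii I)u = 0$ explicitly using the form of $J$ in equation (\ref{e:jac}). This yields the three scalar relations
\[
b_1 u_2 = (\ii - a_1) u_1, \qquad b_2 u_3 = (\ii - a_2) u_2, \qquad b_3 u_1 = (\ii - a_3) u_3,
\]
which is already the cyclic system of the lemma, modulo dividing by $u_j$ and $b_j$. By Proposition \ref{p:bneq0} we know $b_j \neq 0$, so division by $b_j$ is legitimate throughout.

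Next I would dispose of the nonvanishing of each $u_j$. Suppose $u$ is an eigenvector for $\ii$ and $u_j = 0$ for some $j$. Reading the $j$-th rewritten equation, the right-hand side $(\ii - a_j)u_j$ vanishes, hence $b_j u_{j+1} = 0$, and since $b_j \neq 0$ we get $u_{j+1} = 0$. Cycling this argument through $j \pmod 3$ forces all components of $u$ to be zero, contradicting that $u$ is an eigenvector. This establishes the forward direction: the ratio formulas $u_{j+1}/u_j = (\ii - a_j)/b_j$ hold and the $u_j$ are all nonzero.

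For the converse, suppose all $u_j \neq 0$ and the three ratio equations hold. Clearing denominators recovers $b_j u_{j+1} = (\ii - a_j) u_j$ for $j = 1,2,3 \pmod 3$, which are precisely the three components of $(J - \ii I)u = 0$. Since $u \neq 0$, it is an eigenvector for $\ii$. The only potential obstacle here is cyclic consistency: multiplying the three ratio equations gives
\[
1 \;=\; \frac{u_2}{u_1}\cdot\frac{u_3}{u_2}\cdot\frac{u_1}{u_3} \;=\; \frac{(\ii-a_1)(\ii-a_2)(\ii-a_3)}{b_1 b_2 b_3},
\]
which is exactly (\ref{e:omegaprod}) with $\omega$ normalised to $1$ as in Section \ref{S:ST}. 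Thus consistency is free of charge, being guaranteed by the hypothesis that $\ii$ is an imaginary Hopf eigenvalue of $J$. No substantial obstacle arises; the lemma is essentially a repackaging of the eigenvector equation into cyclic multiplicative form.
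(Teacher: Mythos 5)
Your proof is correct and takes essentially the same route as the paper, which simply cites Theorem \ref{T:eigenstructure_J} (the explicit eigenvector) with $\omega=1$; you just carry out the same componentwise computation of $(J-\ii I)u=0$ inline. Your version is somewhat more careful, since you explicitly justify the nonvanishing of the $u_j$ and the cyclic consistency via \eqref{e:omegaprod}, points the paper leaves implicit.
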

\begin{proof}
This follows directly from Theorem \ref{T:eigenstructure_J} with $\omega = 1$.
\end{proof}

\begin{proposition}
\label{p:arg}
Assume that $a_1a_2+a_1a_3+a_2a_3>0$, and normalise $t$ so that
there is a Hopf bifurcation
with $a_1a_2+a_1a_3+a_2a_3=-1$.
Let $\phi(t) = [\phi_1(t), \phi_2(t), \phi_3(t)]^\mathrm{T}$ be the linearised eigenfunction for eigenvalue $\ii $.
Then taking $j \pmod{3}$, the phase shifts $\theta_j$ between $\phi_j(t)$ and $\phi_{j+1}(t)$ are:
\begin{equation}
\label{e:arg}
\theta_j = 2\pi -\arg\left(\frac{\ii -a_j}{b_j} \right) \quad j=1,2,3 \pmod{3}
\end{equation}
Here, when working {\rm (mod 3)}, we replace $0$ by $3$.
\end{proposition}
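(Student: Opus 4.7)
The plan is to combine Lemma~\ref{L:ratios} with the phase-shift lemma stated immediately before Proposition~\ref{p:arg}; the proposition is then essentially a direct substitution. First I would invoke the rescaling of Section~\ref{S:ST} so that the critical imaginary eigenvalue is $\ii$ (so that $a_1a_2+a_1a_3+a_2a_3 = 1$), and write the linearised eigenfunction as $\phi(t) = \ee^{\ii t} u$, where $u = [u_1,u_2,u_3]^{\mathrm{T}}$ is an eigenvector of $J$ for the eigenvalue $\ii$ as in Theorem~\ref{T:eigenstructure_J}\,(2).

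Before applying the phase-shift lemma, I would verify that all components $u_j$ are nonzero so that the polar representations $u_j = r_j\ee^{\ii\psi_j}$ are well defined. By Lemma~\ref{L:ratios} each ratio $u_{j+1}/u_j$ equals $(\ii - a_j)/b_j$. The numerator is nonzero because $a_j\in\R$ and $\omega = 1 \ne 0$, while the denominator is nonzero by Proposition~\ref{p:bneq0}. Hence each ratio is a finite nonzero complex number, which forces every $u_j \neq 0$ once one component is nonzero. I would then apply the phase-shift lemma with $k = j+1$: the phase shift from $\phi_j(t)$ to $\phi_{j+1}(t)$ equals $\psi_j - \psi_{j+1}$, and by that lemma this equals $2\pi - \arg(u_{j+1}/u_j)$ when normalised to $[0,2\pi)$. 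Substituting the ratio supplied by Lemma~\ref{L:ratios} yields
\[
\theta_j \;=\; 2\pi - \arg\!\left(\frac{\ii - a_j}{b_j}\right),
\]
as required. Cyclic indexing mod $3$ is already built into Lemma~\ref{L:ratios}, so the formula is uniform for $j = 1,2,3$ with the convention that $0$ is replaced by $3$.

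I do not expect a genuine obstacle here: once the rescaling $\omega = 1$ is in force and the two preceding lemmas are available, the argument is purely algebraic. The only technical care needed is the nonvanishing of each $u_j$, addressed above, which guarantees that every $\arg$ appearing in the statement is well defined.
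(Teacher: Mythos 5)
Your proposal is correct and follows essentially the same route as the paper: both write the linearised eigenfunction as $\ee^{\ii t}u$ with $u$ the eigenvector for $\ii$, apply the phase-shift lemma giving $\theta_j = 2\pi-\arg(u_{j+1}/u_j)$, and substitute the ratio $(\ii-a_j)/b_j$, with the $j=3$ wrap-around handled by the product identity~\eqref{e:omegaprod} (which you access via Lemma~\ref{L:ratios} rather than invoking it directly). Your explicit check that each $u_j\neq 0$ is a minor extra point of care not spelled out in the paper, but the argument is otherwise identical.
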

\proof
By Proposition~\ref{p:bneq0} the 
$b_j$ are nonzero, so we can divide by them.
The general linearised solution has the form
\[
X(t) = \exp(J)X(0)
\]
Take $X(0)= u$ in the (complex) eigenspace for $\ii $. 
Now
\[
X(t) = [e^{\ii  t} u_1, e^{\ii  t} u_2, e^{\ii  t} u_3]^\mathrm{T}
\]
Choosing $u$ as in~\eqref{e:iomegaeigenvector},
there exist real $r_j > 0$ such that
\[
\frac{\ii -a_j}{b_j} = r_j e^{\ii \theta_j} \quad j=1,2,3 \pmod{3}
\]
Now
\[
u_1 = 1 \qquad u_2 = r_1 e^{\ii \theta_1} \qquad u_3 = r_1r_2 e^{\ii (\theta_1+\theta_2)}
\]
By~\eqref{e:j-k_arg} the relative phases are $2\pi$ minus
the arguments of $u_{j+1}/u_j$, which
are therefore the $\theta_j$. (When $j=3$, equation \eqref{e:omegaprod} implies that 
$\theta_1+\theta_2+\theta_3 \equiv 0 \pmod{2\pi}$.
Therefore $e^{\ii (\theta_1+\theta_2+\theta_3)} = 1$.)
\qed

We can rewrite \eqref{e:arg} as
\[
\theta = \arg\frac{-\ii-a_j}{b_j}
\]

\paragraph{Quadrants}

The first, second, third, and fourth {\em open quadrants} consist 
of the complex numbers whose arguments are respectively in the ranges
\[
(0,\pi/2) \qquad (\pi/2,\pi) \qquad (\pi,3\pi/2) \qquad (3\pi/2, 2\pi)
\]

\begin{lemma}
The expression
\[
\frac{\ii -a_j}{b_j}
\]
lies in the:

\quad first quadrant if and only if $a_j < 0, b_j > 0$

\quad second quadrant if and only if $a_j > 0, b_j > 0$

\quad third quadrant if and only if $a_j < 0, b_j < 0$

\quad fourth quadrant if and only if $a_j > 0, b_j < 0$ 
\end{lemma}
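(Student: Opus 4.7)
The plan is a direct sign analysis. I would split the complex number into real and imaginary parts and read off the quadrant from the sign pattern, invoking Proposition~\ref{p:bneq0} to guarantee $b_j \neq 0$ so the division is legitimate.

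First I would write
\[
\frac{\ii - a_j}{b_j} \;=\; \frac{-a_j}{b_j} \;+\; \frac{1}{b_j}\,\ii,
\]
so that the real part is $-a_j/b_j$ and the imaginary part is $1/b_j$. Since the open quadrants are determined by the strict signs of the real and imaginary parts, this reduces the statement to an elementary case split.

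Next I would observe that the imaginary part $1/b_j$ is positive iff $b_j > 0$ (upper half-plane) and negative iff $b_j < 0$ (lower half-plane). Then for each sign of $b_j$, I would consider the two possible signs of $a_j$: the real part $-a_j/b_j$ is positive precisely when $a_j$ and $b_j$ have opposite signs, and negative when they have the same sign. Tabulating the four cases gives exactly the four equivalences in the lemma: $(a_j<0,b_j>0)$ yields positive real and positive imaginary part (first quadrant), $(a_j>0,b_j>0)$ yields negative real and positive imaginary part (second quadrant), $(a_j<0,b_j<0)$ yields negative real and negative imaginary part (third quadrant), and $(a_j>0,b_j<0)$ yields positive real and negative imaginary part (fourth quadrant).

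There is really no obstacle here; the only care needed is to remember that the statement concerns \emph{open} quadrants, so one should note implicitly that $a_j \neq 0$ (otherwise the expression lies on the imaginary axis and belongs to no open quadrant). A one-line remark covering this boundary case, together with the sign table above, completes the proof.
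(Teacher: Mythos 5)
Your proposal is correct and follows exactly the paper's approach: the paper's entire proof is ``Consider signs of real and imaginary parts,'' and you have simply carried out that sign analysis explicitly (correctly identifying the real part as $-a_j/b_j$ and the imaginary part as $1/b_j$). Your closing remark about the boundary case $a_j = 0$ is a sensible addition, consistent with the paper's Case C where $\theta_1$ is pinned to $\pi/2$ or $3\pi/2$ rather than an open quadrant.
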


\proof
Consider signs of real and imaginary parts.
\qed

\paragraph{Conditions for a Stable Hopf Bifurcation to be Possible}

Necessary conditions to obtain a stable Hopf bifurcation are
that:
\beqn
\tau = a_1+a_2+a_3&<& 0 \\
a_1a_2+a_1a_3+a_2a_3 &>& 0
\eeqn
Because we have Hopf bifurcation at a simple eigenvalue, 
stability then depends (only) on the direction of branching,
determined by non-linear terms. A supercritical branch is stable,
a subcritical one is unstable \cite[Chapter 1 Section 4]{HKW81}. With suitable choices of these nonlinear
terms, we can arrange for a stable branch to exist.

\subsection{Constraints on Signs}

The matrix $J$ has purely imaginary eigenvalues and 
a negative real eigenvalue if and only if
\begin{eqnarray} 
\label{hopf_constraints1}
	(a_1+a_2)(a_1+a_3)(a_2+a_3) &=& b_1b_2b_3 \\
\label{hopf_constraints2}
	a_1 + a_2 + a_3 &<& 0  \\
\label{hopf_constraints3}
	a_1a_2 +a_1a_3 + a_2a_3 &>& 0
\end{eqnarray}

We now show that:
\begin{lemma}
Hopf bifurcation from a stable equilibrium implies
that 
\[
a_1+a_2,\ a_1+a_3,\ a_2+a_3<0
\]
and 
\[
b_1b_2b_3 <0
\]
\end{lemma}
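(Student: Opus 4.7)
The plan is to reduce the lemma to showing that each pairwise sum $a_i+a_j$ is strictly negative, after which the product identity \eqref{hopf_constraints1} at once forces $b_1b_2b_3 < 0$ as the product of three negative numbers. Stability of the equilibrium just before the bifurcation requires the third (real) Jacobian eigenvalue to be negative, which is condition \eqref{hopf_constraints2}; together with \eqref{hopf_constraints1} and \eqref{hopf_constraints3} these are the only facts I would use.

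To establish negativity of the pairwise sums I would argue by contradiction. Proposition \ref{p:bneq0} combined with \eqref{hopf_constraints1} forbids any pairwise sum from vanishing, since $b_1b_2b_3 \neq 0$. Suppose then that some sum is strictly positive; after relabelling, $a_2+a_3 > 0$. Condition \eqref{hopf_constraints2} forces $a_1 < -(a_2+a_3) < 0$, so in particular $a_1 < 0$ and $|a_1| > a_2+a_3$. Rewriting \eqref{hopf_constraints3} as
\[
a_2 a_3 \;>\; -a_1(a_2+a_3) \;=\; |a_1|(a_2+a_3),
\]
and substituting the lower bound $|a_1| > a_2+a_3$ on the right yields $a_2 a_3 > (a_2+a_3)^2$.

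The contradiction is now immediate from the identity $(a_2+a_3)^2 - 4 a_2 a_3 = (a_2-a_3)^2 \geq 0$: this gives $a_2 a_3 > 4 a_2 a_3$, hence $a_2 a_3 < 0$, which is incompatible with $a_2 a_3 > (a_2+a_3)^2 \geq 0$. Thus every $a_i+a_j$ is negative, and \eqref{hopf_constraints1} finishes the lemma. I do not foresee any genuine obstacle; the only mildly subtle point is unpacking "stable equilibrium" to extract \eqref{hopf_constraints2}, which is in any case the content of the sentence introducing the three conditions just above the statement.
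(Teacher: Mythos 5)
Your proof is correct, but it takes a genuinely different and rather cleaner route than the paper's. The paper first scales time to normalise $a_1=-1$ (after using \eqref{hopf_constraints2} to ensure some $a_j<0$), sets $x=a_2$, $y=a_3$, and then runs a case analysis on the signs of $x+y$, $x-1$, $y-1$, using several ad hoc estimates such as $x+y-xy> (x-y)^2+xy$ for $0<x,y<1$ and $(x-1)(y-1)>1$; it treats the sum $a_2+a_3$ and the sums $a_1+a_2$, $a_1+a_3$ by separate arguments. You instead exploit the full symmetry of \eqref{hopf_constraints1}--\eqref{hopf_constraints3} in the indices: assuming any one pairwise sum is positive, conditions \eqref{hopf_constraints2} and \eqref{hopf_constraints3} give $a_2a_3>-a_1(a_2+a_3)>(a_2+a_3)^2$, and the universal inequality $(a_2+a_3)^2\geq 4a_2a_3$ then yields $a_2a_3>4a_2a_3$ and $a_2a_3<0$, contradicting $a_2a_3>(a_2+a_3)^2\geq 0$; I checked this chain and it is airtight. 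Your treatment of the degenerate case (no pairwise sum can vanish, by Proposition~\ref{p:bneq0} and \eqref{hopf_constraints1}) matches the paper's parenthetical remark, and the final deduction $b_1b_2b_3<0$ from \eqref{hopf_constraints1} is the same. What your approach buys is brevity and uniformity --- one three-line contradiction disposes of all three sums at once, with no normalisation and no sub-cases; what the paper's normalisation buys is a reduction to two parameters, which is convenient for the explicit sign tables that follow but is not needed for this lemma. One small point of care: the paper deliberately restricts relabellings to cyclic ones (graph automorphisms), whereas your ``after relabelling'' implicitly uses an arbitrary permutation; this is harmless here because the three conditions you invoke are fully symmetric in the indices, but it is worth saying so explicitly.
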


\begin{proof}

Stability of the equilibrium prior to bifurcation is ensured by
\eqref{hopf_constraints2}, Moreover, \eqref{hopf_constraints2} implies that 
at least one of the $a_j < 0$. Renumbering nodes if necessary while
retaining cyclic order (a {\em graph automorphism} \cite{GS16a}) we can assume
$a_1 < 0$. Scaling time by $\delta = 1/|a_1| > 0$  does not affect the signs of
any entries of $J$ and preserves the conditions
(\ref{hopf_constraints1}, \ref{hopf_constraints2}, \ref{hopf_constraints3}).
Therefore without loss of generality we can assume $a_1 = -1$.
Having done so, let
\[
x = a_2 \qquad y = a_3 \qquad c_j = b_j \ (1 \leq j \leq 3)
\]
Now the conditions for Hopf bifurcation from a stable branch of equilibria
when it loses stability
become:
\begin{eqnarray} 
\label{scaled_constraints1}
	(x-1)(y-1)(x+y) &=& c_1c_2c_3 \\
\label{scaled_constraints2}
	x + y &<& 1  \\
\label{scaled_constraints3}
	xy &>& x+y
\end{eqnarray}
We have to prove that each of
\[
(x-1),\ (y-1),\ (x+y) <0
\]
(By Proposition~\ref{p:bneq0}, none of these expressions can be zero.)
First, suppose that $x+y > 0$. Then 
\[
0 < x+y < 1
\]
By~\eqref{scaled_constraints3}, 
\[
(x-1)(y-1) > 1
\]
Therefore $x-1,y-1$ have the same sign.

If $x-1,y-1 > 0$ then $x, y > 1$ so $x+y > 2$, contradicting~\eqref{scaled_constraints2}.

The only possibility remaining is that $x-1,y-1 < 0$, so $x, y < 1$.
By~\eqref{scaled_constraints3}, $xy>0$. So $x, y$ have the same sign. It
cannot be negative because we are assuming $x+y > 0$. Therefore $x, y > 0$.
Now $0 < x, y < 1$, and
\[
x+y-xy > x^2+y^2-xy = (x-y)^2+xy > 0
\]
contradicting~\eqref{scaled_constraints3}.
Therefore $x+y > 0$ is impossible, so $x+y < 0$.

If $x-1, y-1 < 0$, that is, $x, y < 1$, the result is proved. 

If not, we assume at least one of $x-1, y-1 > 0$ and derive
a contradiction as follows.
Since $x+y < 0$, it is not possible for both $x, y > 1$. Therefore 
(interchanging $x, y$ if necessary) we can assume that $x>1, y<1$.
Now $(x-1)(y-1) < 0$, so
\[
xy-x-y = (x-1)(y-1) - 1 < 0
\]
contradicting~\eqref{scaled_constraints3}.
This proves that $a_1+a_2,\ a_1+a_3,\ a_2+a_3<0$
as claimed. Finally, by~\eqref{scaled_constraints1}, $b_1b_2b_3 <0$.
\end{proof}

There are therefore two distinct cases: all three $b_j < 0$,
or some $b_j < 0$ and the other two $> 0$. Up to a graph automorphism,
the second case has $b_1 < 0, b_2 > 0, b_3 > 0$.

\begin{remark} \rm
\label{r:signs}
We have not proved that all three of the $a_j$ are negative,
and examples show that this need not be the case. For example,
the necessary conditions for stable Hopf bifurcation are satisfied when
\[
a_1 = 1 \quad a_2 =  -2 \quad a_3 = -3 \qquad b_1= 1 \quad b_2 = 1 \quad b_3= -10
\]
with eigenvalues $-4,\ii, -\ii$.

It is also possible for one $a_j$ to be zero. An example is
\[
a_1 = 0 \quad a_2 =  -2 \quad a_3 = -3 \qquad b_1= 1 \quad b_2 = 1 \quad b_3= -30
\]
with eigenvalues $-5, \ii\sqrt{6}, -\ii\sqrt{6}$.

If two of the $a_j$ are 0 then $\omega = 0$, which is not permitted.

The signs of the $b_j$ do not affect the existence of the imaginary
eigenvalue or the stability of the real eigenvalue. The first example
above can be modified so that $b_1 = -1, b_2 = -1, b_3 = -10$
and the second can be modified so that $b_1 = -1, b_2 = -1, b_3 = -30$
without affecting the eigenvalues.
\end{remark}

\subsection{Constraints on Phase Shifts}

Equation~\eqref{e:arg} governs the phase shifts $\theta_j$ for
the linearised eigenfunction at Hopf bifurcation, which are the
asymptotic values of phase shifts along the branch 
as it approaches the bifurcation point.

The argument of a complex number is unchanged if it is multiplied by
any positive real number, and is increased by $\pi$ if it is multiplied by
any negative real number. Equation~\eqref{e:arg} shows that
$\theta_j$ depends on $a_1, a_2, a_3$, but only on the sign of $b_j$.

Specifically,
\[
\theta_j = 2\pi-\arg\left(\frac{ \ii \omega - a_j}{b_j}\right) = 2\pi-\arg\left({\rm sgn}(b_j)(\ii\omega - a_j)\right)
\]
So we get either  
\[ 
\theta_j = 2\pi- \arg( \ii\omega - a_j)  \quad \mbox{or}\quad  \theta_j =3\pi- \arg( \ii\omega - a_j)  
\]
depending on the sign of $b_j$, where now $\omega = \pm 1$.

The quadrant to which $\theta_j$ belongs depends only on
the signs of the $a_j, b_j$ and the sign of $\omega$. The value of $\theta_j$ 
depends only on $a_j$ and the sign of  $b_j$.

The main role of the $b_j$ is to affect the amplitudes 
of the three components of the linearised eigenfunction.

To understand the implications of these inequalities,
and relate them to the phase shifts, we tabulate the quadrants
in which the phase shifts $\theta_j$ lie, for all possible combinations
of signs. To reduce the list, we again apply a suitable
relabelling of the nodes in the same cyclic order.

To deal with both imaginary eigenvalues we
must consider the cases $\omega =1$ and $\omega =-1$.
For each such choice we
distinguish three cases A, B, C as listed below.

\vspace{.2in}
\noindent {\bf Case 1:} $\boldsymbol{\omega =1}$

{\em Case A}: $a_1,a_2,a_3 < 0$.

{\em Case B}: Some $a_j > 0$. Without loss of generality, by relabelling the nodes 
in the same cyclic order, $a_1> 0$.

{\em Case C}: Some $a_j = 0$. Without loss of generality,$a_1 = 0$.

In Case B, we have
\beqn
a_2+a_3 < -a_1 &<& 0\\
a_1(a_2+a_3)+a_2a_3 &>& 0
\eeqn
Therefore
\[
a_2a_3 > - a_1(a_2+a_3) > 0
\]
We cannot have $a_2, a_3 > 0$ since then $a_1+a_2+a_3 > 0$.
Since $a_2a_3> 0$ we must have $a_2, a_3 < 0$.

In Case C, $a_2+a_3 < 0$ and $a_2a_3 > 0$. So
$a_2, a_3 < 0$.

The corresponding quadrants for $(\theta_1, \theta_2, \theta_3)$ are then
as shown in Tables~\ref{T:phaseshiftsA}, ~\ref{T:phaseshiftsB},
and ~\ref{T:phaseshiftsC}.

By Remark~\ref{r:signs}, each of these cases can occur with
either one or three of the $b_j < 0$.

\begin{table}[!htb]
\begin{center}
\begin{tabular}{|ccc|ccc|}
\hline
 &  &  &  quadrant  &  quadrant &  quadrant \\
$b_1$ & $b_2$ & $b_3$ &  $\theta_1$ & $\theta_2$& $\theta_3$\\
\hline
\hline
 - & - & - & 3 & 3 & 3 \\
 - & + & + &  3 & 1 & 1\\
+ & - & + &  1 & 3 & 1 \\
+ & + & - & 1 & 1 & 3 \\
\hline
\end{tabular}
\caption{Classification of combinations of phase shifts by quadrant
in Case A when $\omega > 0$.}
\label{T:phaseshiftsA}
\end{center}
\end{table}

\begin{table}[!htb]
\begin{center}
\begin{tabular}{|ccc|ccc|}
\hline
 &  &  &  quadrant  &  quadrant &  quadrant \\
$b_1$ & $b_2$ & $b_3$ &  $\theta_1$ & $\theta_2$& $\theta_3$\\
\hline
\hline
 - & - & - & 4 & 3 & 3 \\
 - & + & + &  4 & 1 & 1\\
+ & - & + &  2 & 3 & 1 \\
+ & + & - & 2 & 3 & 1 \\
\hline
\end{tabular}
\caption{Classification of combinations of phase shifts by quadrant
in Case B when $\omega > 0$.}
\label{T:phaseshiftsB}
\end{center}
\end{table}

\begin{table}[!htb]
\begin{center}
\begin{tabular}{|ccc|ccc|}
\hline
 &  &  &  angle  &  quadrant &  quadrant \\
$b_1$ & $b_2$ & $b_3$ &  $\theta_1$ & $\theta_2$& $\theta_3$\\
\hline
\hline
 - & - & - & $3\pi/2 $& 3 & 3 \\
 - & + & + &  $3\pi/2$ & 1 & 1\\
+ & - & + &  $\pi/2$ &3 & 1 \\
+ & + & - & $\pi/2 $ & 3 & 1 \\
\hline
\end{tabular}
\caption{Classification of combinations of phase shifts by quadrant
in Case C when $\omega > 0$. Note that $\theta_1$ is specified exactly.}
\label{T:phaseshiftsC}
\end{center}
\end{table}

Columns 2 and 3 are the same in all three tables,
since only the sign of $a_1$ changes. Table C is the transitional case.

This motivates:
\begin{definition}\em
The periodic solution is a {\em rotating wave} or
{\em maximally asynchronous} if all $\theta_j$
lie in quadrant 2 or all  $\theta_j$
lie in quadrant 3 (for a reverse direction wave).

It is a {\em standing wave} or
{\em maximally synchronous} if it is not a rotating wave.
\end{definition}

\begin{theorem}
The solution is a rotating wave if and only if all $a_j, b_j$ are negative.
\end{theorem}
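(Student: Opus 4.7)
The plan is a sign-chase combining equation \eqref{e:arg} with the quadrant lemma just before it. Normalise $\omega = 1$ as in Section \ref{S:ST}, so \eqref{e:arg} reads $\theta_j = 2\pi - \arg((\ii - a_j)/b_j)$; the quadrant lemma then reads off the quadrant of $(\ii - a_j)/b_j$, and hence the quadrant of $\theta_j$, purely from the pair $(\mathrm{sgn}(a_j), \mathrm{sgn}(b_j))$. This collapses the rotating-wave condition onto the question of which global sign patterns for $(a_j, b_j)$ place every $\theta_j$ into one and the same open quadrant of $\Sone$.

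For the ``if'' direction I would assume $a_j, b_j < 0$ for every $j$. The quadrant lemma puts $(\ii - a_j)/b_j$ in the third quadrant of $\C$ for each $j$, so $\arg((\ii - a_j)/b_j) \in (\pi, 3\pi/2)$ and therefore $\theta_j \in (\pi/2, \pi)$; every $\theta_j$ lies in a common open quadrant, which matches the rotating-wave definition. For the converse, the rotating-wave hypothesis forces $(\mathrm{sgn}(a_j), \mathrm{sgn}(b_j))$ to be constant in $j$. Inspection of the quadrant lemma shows that, of the four sign pairs, only the uniformly-negative and uniformly-positive ones give a common $\theta_j$-quadrant; mixed sign choices split the $\theta_j$ across different quadrants, exactly as recorded in the non-uniform rows of Tables \ref{T:phaseshiftsA}, \ref{T:phaseshiftsB}, \ref{T:phaseshiftsC}. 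The uniformly-positive alternative is then discarded by the standing stability hypothesis: all $a_j > 0$ would yield $a_1 + a_2 + a_3 > 0$, contradicting \eqref{hopf_constraints2}, and equivalently all $b_j > 0$ contradicts the conclusion $b_1 b_2 b_3 < 0$ of the sign lemma immediately preceding the theorem. Only the uniformly-negative pattern survives.

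The computation is otherwise routine. The single non-obvious move — and the one thing worth flagging in the write-up — is the appeal to stability to eliminate the ``reverse-direction'' rotating wave associated with all $a_j, b_j > 0$, since the rotating-wave definition by itself is symmetric under direction reversal and would admit that configuration as well.
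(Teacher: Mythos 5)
Your argument is correct and is in substance the same as the paper's: the proof given there is simply ``inspect the three tables'', and those tables are generated by exactly the sign-chase you describe, so you have made the table construction explicit rather than found a different route. Two points need tightening, though. First, the paper's definition of a rotating wave is not ``all $\theta_j$ in one and the same open quadrant'' but specifically ``all in quadrant 2 or all in quadrant 3''; a constant sign pair such as $a_j>0,\ b_j<0$ for every $j$ would also place every $\theta_j$ in a single common quadrant (namely 1 or 4) without being a rotating wave, so your sentence ``mixed sign choices split the $\theta_j$ across different quadrants'' is both false for constant mixed patterns and not what does the work. Your proof survives only because the exclusions you invoke --- $a_1+a_2+a_3<0$ from \eqref{hopf_constraints2} and $b_1b_2b_3<0$ from the lemma preceding the theorem --- happen to eliminate every constant sign pattern other than the all-negative one; this should be said explicitly. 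Second, you should dispose of Case C (some $a_j=0$) and of the conjugate eigenvalue $\omega=-1$: in the former the quadrant lemma does not apply and $\theta_j$ lands exactly on a quadrant boundary ($\pi/2$ or $3\pi/2$), so it lies in no open quadrant and the state is not a rotating wave, consistent with the theorem; in the latter the quadrants are conjugated ($1\leftrightarrow 4$, $2\leftrightarrow 3$) and the all-negative case gives quadrant 2 rather than 3, which the definition also accepts. Your closing observation --- that the standing stability hypotheses are genuinely needed to kill the reverse-direction all-positive configuration --- is correct, and is precisely why the paper tabulates only Cases A, B, C under the assumptions \eqref{hopf_constraints2} and $b_1b_2b_3<0$.
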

   
\begin{proof}
Inspect the three tables. 
\end{proof}

In all other cases at least two nodes have `absolute' (minimal positive) phase difference
$\pi/2$ or less (quadrants 1, 4). Indeed, there is always something in quadrant 1.

\vspace{.2in}
\noindent {\bf Case 2:}  $\boldsymbol{\omega =-1}$

The results in this case can be obtained from those for
$\omega =1$ by taking the complex conjugate (equivalently,
replacing $a_j$ by $-a_j$). This changes the quadrants
by interchanging 1 with 4 and 2 with 3.
The corresponding quadrants for $(\theta_1, \theta_2, \theta_3)$ are then
as shown in Tables~\ref{T:phaseshiftsAneg}, ~\ref{T:phaseshiftsBneg},
and ~\ref{T:phaseshiftsCneg}.

\begin{table}[!htb]
\begin{center}
\begin{tabular}{|ccc|ccc|}
\hline
 &  &  &  quadrant  &  quadrant &  quadrant \\
$b_1$ & $b_2$ & $b_3$ &  $\theta_1$ & $\theta_2$& $\theta_3$\\
\hline
\hline
 - & - & - & 2 & 2 & 2 \\
 - & + & + &  2 & 4 & 4\\
+ & - & + &  4 & 2 & 4 \\
+ & + & - & 4 & 4 & 2 \\
\hline
\end{tabular}
\caption{Classification of combinations of phase shifts by quadrant
in Case A when $\omega < 0$.}
\label{T:phaseshiftsAneg}
\end{center}
\end{table}

\begin{table}[!htb]
\begin{center}
\begin{tabular}{|ccc|ccc|}
\hline
 &  &  &  quadrant  &  quadrant &  quadrant \\
$b_1$ & $b_2$ & $b_3$ &  $\theta_1$ & $\theta_2$& $\theta_3$\\
\hline
\hline
 - & - & - & 1 & 2 & 2 \\
 - & + & + &  1 & 4 & 4\\
+ & - & + &  3 & 2 & 4 \\
+ & + & - & 3 & 2 & 4 \\
\hline
\end{tabular}
\caption{Classification of combinations of phase shifts by quadrant
in Case B when $\omega < 0$.}
\label{T:phaseshiftsBneg}
\end{center}
\end{table}

\begin{table}[!htb]
\begin{center}
\begin{tabular}{|ccc|ccc|}
\hline
 &  &  &  angle  &  quadrant &  quadrant \\
$b_1$ & $b_2$ & $b_3$ &  $\theta_1$ & $\theta_2$& $\theta_3$\\
\hline
\hline
 - & - & - & $\pi/2 $& 2 & 2 \\
 - & + & + &  $\pi/2$ & 4 & 4\\
+ & - & + &  $3\pi/2$ & 2 & 4 \\
+ & + & - & $3\pi/2 $ & 2 & 4 \\
\hline
\hline
\end{tabular}
\caption{Classification of combinations of phase shifts by quadrant
in Case C when $\omega < 0$. Note that $\theta_1$ is specified exactly.}
\label{T:phaseshiftsCneg}
\end{center}
\end{table}

\section{Directed Rings of $\boldsymbol n$ nodes}
\label{S:DRnN}

Much of the above generalises to a directed ring of $n$ nodes
with nearest-neighbour coupling. Now the Jacobian takes the form
\[
J = \Matrix{a_1 & b_1 & 0 & \cdots & 0 \\
	 0 & a_2 & b_2 & \cdots & 0 \\
	\vdots & 0 & \ddots & \cdots & \vdots \\
	b_n & 0 & \cdots & 0 & a_n}
\]

A version of the following theorem is proved in
\cite[Theorem SM1.1, supplementary material]{GGPSW19} for any fully inhomogeneous network,
but a direct proof is straightforward and yields slightly more information,
as discussed after the proof.

\begin{theorem}
\label{T:simple_generic}
The eigenvalues of $J$ are generically simple. Indeed, for any 
fixed $a_1, \ldots, a_n$, small perturbations of $b_1, \ldots, b_n$
remove any multiple eigenvalues.
\end{theorem}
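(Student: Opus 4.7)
The plan is to exploit the very rigid structure of the characteristic polynomial of the cyclic bidiagonal matrix $J$. First I would compute
\[
p(x) = \det(xI - J) = \prod_{j=1}^{n}(x-a_j) - b_1 b_2 \cdots b_n,
\]
which follows from cofactor expansion along the first column (the only nonzero entries outside the diagonal are the $-b_j$ on the cyclic superdiagonal, and the unique off-diagonal term contributing to the determinant is the product of these entries). This generalises the $n=3$ identity \eqref{e:chareeq} already used in the excerpt.

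The key observation is that the derivative
\[
p'(x) = \sum_{j=1}^{n}\prod_{k\neq j}(x-a_k)
\]
depends only on $a_1,\ldots,a_n$, not on the $b_j$. An eigenvalue $\mu$ of $J$ is multiple if and only if $p(\mu)=0$ and $p'(\mu)=0$. Since $\deg p' = n-1$, the polynomial $p'$ has at most $n-1$ (complex) roots $\mu_1,\ldots,\mu_r$, fixed once the $a_j$ are fixed. For each such root, $p(\mu_i)=0$ amounts to the single scalar condition
\[
\beta \;=\; \prod_{j=1}^{n}(\mu_i - a_j), \qquad \beta := b_1 b_2 \cdots b_n.
\]
Hence there is a finite set $B(a_1,\ldots,a_n) \subset \C$, of cardinality at most $n-1$, such that $J$ has a multiple eigenvalue if and only if $\beta \in B$.

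To conclude, I would argue that an arbitrarily small admissible perturbation of the coupling terms suffices to move $\beta$ off this finite set. If all $b_j\neq 0$, perturbing any single $b_j$ by $(1+\eps)$ multiplies $\beta$ by $(1+\eps)$, so we can reach any $\beta$ in a punctured neighbourhood of the original value and in particular avoid $B$. If some $b_j=0$, first replace it by a small nonzero value; then $\beta$ becomes nonzero and the previous argument applies. In either case the perturbation is admissible (it only alters coupling coefficients, leaving the internal dynamics $a_j$ untouched), giving the stronger statement in the theorem. The only mildly delicate point, which I would flag explicitly, is that this genericity argument is carried out at the level of the Jacobian at a single point; but since the theorem is a statement about the Jacobian alone, and the set of admissible Jacobians realised by admissible vector fields is precisely the set of matrices of the prescribed cyclic shape (with arbitrary entries), the perturbation at the matrix level can always be realised by a linear admissible perturbation of the ODE.
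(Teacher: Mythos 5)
Your proposal is correct and follows essentially the same route as the paper: write the characteristic polynomial as $\prod_j(x-a_j)$ minus the product of the $b_j$, observe that its derivative is independent of the couplings and so has a fixed finite root set, and perturb the product $b_1\cdots b_n$ to avoid the finitely many values that would create a common root. Your formulation of the bad set as $B=\{\prod_j(\mu_i-a_j)\}$ is in fact a slightly cleaner bookkeeping of the same final step.
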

\begin{proof}
Expanding $\det(J)$ along row 1 and using induction, we see that
\[
\det(J) = a_1 \cdots a_n + (-1)^{n+1} b_1 \cdots b_n
\]
Replacing $a_i$ by $a_i-\lambda$, the characteristic polynomial of $J$ is
therefore
\[
p(\lambda) = \det(J-\lambda I) = (a_1-\lambda) \cdots (a_n-\lambda) + (-1)^{n+1} b_1 \cdots b_n
\]
This has a multiple zero at some $\lambda$ if and only if
\begin{equation}
\label{E:mult_zero}
p(\lambda) = p'(\lambda) = 0
\end{equation}
Now
\beqn
p'(\lambda) &=& (-1)[
(a_2-\lambda) (a_3-\lambda) \cdots (a_n-\lambda) \\
&& + (a_1-\lambda) (a_3-\lambda)\cdots (a_n-\lambda) \\
&& + \cdots + (a_1-\lambda) (a_3-\lambda)\cdots (a_{n-1}-\lambda)
]
\eeqn
which is independent of the $b_i$.

Let the zeros of $p'(\lambda)$ be $\lambda_1, \ldots, \lambda_{n-1}$.
Perturb the $b_i$ to make them all nonzero. Then perturb $b_1$ (say) again
to ensure that
\[
b_1 \cdots b_n \neq p(\lambda_i) \quad 1 \leq i \leq n-1
\]
which can always be done since $\{p(\lambda_i):1 \leq i \leq n-1\}$
is finite and independent of the $b_i$. Now condition \eqref{E:mult_zero} is false.
\end{proof}

This proof shows that for any $a_i$ (linearised internal dynamic)
we can obtain simple eigenvalues by perturbing only the $b_i$
(linearised couplings.)

Assume $\mu$ is an eigenvalue
(later we make $\mu = \ii \omega$ to get a Hopf bifurcation point),
so that
\[
\det(J-\mu I) = 0
\]
Let the corresponding eigenvector be
\[
u = [u_1, u_2, \ldots, u_n]^\mathrm{T}
\]
Then:
\beqn
a_1u_1+b_1u_2 &=& \mu u_1 \\
a_2u_2+b_2u_3 &=& \mu u_2 \\
& \vdots &\\
a_ju_j+b_ju_{j+1} &=& \mu u_j \\
& \vdots &\\
a_{n-1}u_{n-1}+b_{n-1}u_n &=& \mu u_{n-1} \\
b_nu_1+a_nu_n &=&  \mu u_n
\eeqn
Thus for $1 \leq j \leq n-1$ we have
\[
u_{j+1} = \frac{\mu - a_j}{b_j}\,u_j
\]
and $u_1$ is arbitrary. We have $u_1 \neq 0$ or else all $u_j = 0$.
Explicitly:
\beqn
u_1 &=& 1 \\
u_2 &=& \frac{\mu - a_1}{b_1} \\
u_3 &=& \frac{\mu - a_1}{b_1}\,\frac{\mu - a_2}{b_2} \\
u_4 &=& \frac{\mu - a_1}{b_1}\,\frac{\mu - a_2}{b_2}\,\frac{\mu - a_3}{b_3} \\
&\vdots & \\
u_n &=& \frac{\mu - a_1}{b_1} \cdots \frac{\mu - a_{n-1}}{b_{n-1}} 
\eeqn

\begin{remark}\rm
Since $\mu$ satisfies the characteristic equation, 
\[
0 = \det (J-\mu I) = (a_1-\mu) \cdots (a_n-\mu) + (-1)^{n+1} b_1 \cdots b_n
\]
because the $n$-cycle $(12\ldots n)$ is an even or odd permutation according as
$n$ is odd or even. Therefore
\[
(\mu-a_1) \cdots (\mu-a_n) =  b_1 \cdots b_n
\]
Further multiplication of $u_n$ by $\frac{\mu - a_n}{b_n}$
gets back to $u_1 = 1$.
\end{remark}

We now get a direct generalisation of Proposition~\ref{p:arg}
and equation~\eqref{e:arg}:

\begin{proposition}
\label{p:arg-n}
Assume that, after scaling time, there is a Hopf bifurcation
with eigenvalue $\ii$.

Let $\phi(t) = [\phi_1(t), \ldots \phi_n(t)]^\mathrm{T}$ be the linearised eigenfunction for eigenvalue $\ii \omega$.
Then taking $j \pmod{n}$, the phase shifts $\theta_j$ between $\phi_j(t)$ and $\phi_{j+1}(t)$ are:
\begin{equation}
\label{e:arg-n}
\theta_j = 2\pi-\arg\left(\frac{\ii -a_j}{b_j} \right) \quad j=1, \ldots n \pmod{n}
\end{equation}
(Here, when working$\! \pmod{n}$, we replace $0$ by $n$.)
\end{proposition}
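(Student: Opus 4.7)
The plan is to mirror the proof of Proposition~\ref{p:arg}, exploiting the fact that the eigenvector recursion and the product identity derived just before the statement are literal $n$-node analogues of what was used in the 3-node case. First I would extend Proposition~\ref{p:bneq0}: the product identity $(\ii - a_1)\cdots(\ii - a_n) = b_1 \cdots b_n$ shows that if any $b_j$ vanished, then some factor $\ii - a_k$ would vanish, forcing $a_k = \ii$, contradicting $a_k \in \R$. So every $b_j$ is nonzero and all divisions that follow are legitimate.

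Next, for the eigenvalue $\mu = \ii$, the eigenvector computation displayed before the proposition gives the recursion
\[
\frac{u_{j+1}}{u_j} = \frac{\ii - a_j}{b_j} \qquad 1 \leq j \leq n-1,
\]
with $u_1 = 1$ and all $u_j$ nonzero. The linearised eigenfunction is $\phi_j(t) = \ee^{\ii t} u_j$. Writing $u_j = r_j \ee^{\ii \psi_j}$ in polar form with $r_j > 0$ and $\psi_j \in [0, 2\pi)$, and invoking the phase shift lemma preceding Lemma~\ref{L:ratios}, the phase shift from $\phi_j(t)$ to $\phi_{j+1}(t)$ is
\[
\theta_j = 2\pi - \arg\left(\frac{u_{j+1}}{u_j}\right) = 2\pi - \arg\left(\frac{\ii - a_j}{b_j}\right)
\]
for $1 \leq j \leq n-1$. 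This is the claimed formula in all cases except $j = n$.

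For the wrap-around index $j = n$, the phase shift $\theta_n$ compares $\phi_n(t)$ to $\phi_1(t)$, and I would recover it from the cyclic closure condition. The product identity from the remark preceding the proposition gives
\[
\prod_{j=1}^{n} \frac{\ii - a_j}{b_j} = 1,
\]
so $\sum_{j=1}^n \arg\bigl((\ii - a_j)/b_j\bigr) \equiv 0 \pmod{2\pi}$. Since the phase shifts going once around the ring must also sum to $0 \pmod{2\pi}$, the formula $\theta_j = 2\pi - \arg\bigl((\ii - a_j)/b_j\bigr)$ is forced to hold at $j = n$ as well, completing the cyclic statement (with the convention $0 \equiv n$ noted in the proposition).

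The only step with any real content is the cyclic closure at $j = n$, and it is handled cleanly by the characteristic-equation product identity; the rest is a direct transcription of the 3-node argument, with the $n$-node eigenvector recursion substituted for the explicit 3-component formula used in Proposition~\ref{p:arg}.
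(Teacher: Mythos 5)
Your proof is correct and is essentially the argument the paper intends: the paper states Proposition~\ref{p:arg-n} without an explicit proof, presenting it as a ``direct generalisation'' of Proposition~\ref{p:arg} via the eigenvector recursion $u_{j+1}/u_j = (\ii-a_j)/b_j$ and the product identity $(\ii-a_1)\cdots(\ii-a_n)=b_1\cdots b_n$, which is exactly what you use (including the $n$-node analogue of Proposition~\ref{p:bneq0} and the cyclic closure at $j=n$). Your write-up in fact supplies details the paper leaves implicit, and the closure step could be phrased even more directly by noting that the product identity gives $u_1/u_n=(\ii-a_n)/b_n$, so the recursion holds for all $j \pmod{n}$.
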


\begin{theorem}
\label{T:nores}
Generically there are no resonances (that is,
$\ii$ and $k \ii$ both being eigenvalues, for rational
$k \neq \pm 1$). Indeed, for any 
fixed $a_1, \ldots, a_n$, small perturbations of $b_1, \ldots, b_n$
remove any $p:q$ resonances among eigenvalues.
\end{theorem}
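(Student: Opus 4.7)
The plan is to exploit the very special form of the characteristic polynomial established in the proof of Theorem~\ref{T:simple_generic}, namely
\[
p(\lambda) = \prod_{j=1}^n (a_j - \lambda) + (-1)^{n+1} B,
\]
where $B = b_1 b_2 \cdots b_n$. All dependence on the coupling terms is concentrated in the single scalar $B$. Since perturbing a single $b_i$ suffices to move $B$ freely through any small neighbourhood of its initial value (after first perturbing, if necessary, to make all $b_i$ nonzero), it is enough to show that the set of ``bad'' values of $B$---those for which some $p:q$ resonance exists---is countable; a generic $B$ will then avoid it.

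First I would fix a rational $k \neq \pm 1$ and ask when there can exist $\omega > 0$ such that both $\ii\omega$ and $k\ii\omega$ are eigenvalues of $J$. The equations $p(\ii\omega) = 0$ and $p(k\ii\omega) = 0$ each pin down $B$ in terms of $\omega$ and the $a_j$; subtracting eliminates $B$ entirely, leaving
\[
\prod_{j=1}^n (a_j - \ii\omega) = \prod_{j=1}^n (a_j - k\ii\omega).
\]
Viewed as a polynomial identity in $\omega$, the coefficient of $\omega^n$ on the two sides is $(-\ii)^n$ and $(-\ii k)^n$ respectively, so the difference has leading coefficient $(-\ii)^n (1 - k^n)$. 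Because $k$ is rational and $k \neq \pm 1$, we have $k^n \neq 1$, so this polynomial has degree exactly $n$ and therefore at most $n$ real roots in $\omega$. For each such $\omega$, the equation $p(\ii\omega) = 0$ determines $B$ uniquely, producing a finite set $S_k \subset \R$ of resonant values of $B$ associated with this particular $k$.

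Taking the union $\bigcup_{k \in \mathbb{Q} \setminus \{\pm 1\}} S_k$ yields a countable set of bad $B$, whose complement in $\R$ is dense and of full Lebesgue measure. An arbitrarily small admissible perturbation of any single $b_i$ therefore moves $B$ into the good set, simultaneously excluding all $p:q$ resonances among the imaginary eigenvalues. The step requiring the most care is the leading-coefficient computation confirming that the elimination really produces a non-identically-zero polynomial of degree $n$ for every admissible $k$; once that hurdle is cleared, the remainder is a standard genericity argument directly parallel to the one used for Theorem~\ref{T:simple_generic}.
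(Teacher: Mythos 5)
Your proof is correct, and it follows the same overall genericity strategy as the paper---all dependence on the couplings is concentrated in the single scalar $c=(-1)^{n+1}b_1\cdots b_n$, the candidate resonant eigenvalues are confined to a finite set independent of $c$, and hence only finitely many values of $c$ are bad for each resonance ratio---but your elimination step is genuinely more elementary. The paper manufactures its $c$-independent constraint by noting that $p(\lambda)p(k\lambda)$ has a double zero at $\lambda_0$, differentiating, and substituting $c=-A(\lambda_0)$, which yields an auxiliary polynomial $Q$ of degree $2n-1$; you simply subtract $p(k\ii\omega)=0$ from $p(\ii\omega)=0$, which cancels $c$ outright and leaves $A(\ii\omega)-A(k\ii\omega)=0$, a polynomial of degree exactly $n$ in $\omega$ with leading coefficient $(-\ii)^n(1-k^n)\neq 0$ for rational $k\neq\pm 1$ (the correct analogue of the paper's check that $n(1-k^{n-1})\neq 0$). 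This buys a shorter computation and a sharper count ($n$ candidate frequencies rather than $2n-1$ candidate eigenvalues). The endgames also differ in packaging: the paper removes the countably many resonances by an iterated perturbation of total size $\eps/2+\eps/4+\cdots$, whereas you take the countable union $\bigcup_k S_k$ of bad values of $B$ and move $B$ into its dense complement by a single small perturbation of one $b_i$ (after first making all $b_i$ nonzero); both are sound, and yours handles all ratios $k$ simultaneously in one step. The only point worth making explicit is that for a root $\omega$ at which $A(\ii\omega)$ is not real, no real $B$ satisfies $p(\ii\omega)=0$, so such roots contribute nothing to $S_k$; this is why $S_k\subset\R$ is finite as you assert.
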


\begin{proof}
The idea is to convert a resonance into a double eigenvalue for a
related polynomial and then derive a contradiction. 
Let $k  \neq \pm 1$, and let
\beqn
A(\lambda) &=& (a_1-\lambda) \cdots (a_n-\lambda) \\
c &=& (-1)^{n+1} b_1 \cdots b_n
\eeqn
Then the characteristic polynomial $p(\lambda) = A(\lambda)+c$.
Suppose that at some $\lambda = \lambda_0$ we have $p(\lambda_0) = p(k \lambda_0) = 0$
where $k \in \mathbb{N}, k \geq 2$. Then
\[
A(\lambda_0)+c = A(k\lambda_0) + c = 0
\]
Now $p(\lambda)p(k\lambda)$ has a double zero at $\lambda_0$,
so its derivative vanishes there. That is,
\[
p'(\lambda)p(k\lambda)+kp(\lambda)p'(k\lambda) = 0
\]
at $\lambda_0$, so
\[
p'(\lambda)(A(k\lambda)+c) + k(A(\lambda)+c)p'(k\lambda) = 0
\]
at $\lambda_0$. Thus
\[
p'(\lambda)A(k\lambda) + kA(\lambda)p'(k\lambda) +c(p'(\lambda)+kp'(k\lambda))= 0
\]
at $\lambda_0$. Also $c = -A(\lambda)$ at $\lambda_0$.
Substituting,
\[
Q(\lambda) = p'(\lambda)(A(k\lambda)- A(\lambda)) - p'(k\lambda)(k-1)A(\lambda) = 0
\]
at $\lambda_0$.

The polynomial $Q$ is independent of $c$ (that is, of the $b_i$).
We claim that since $k \geq 2$ it does not 
vanish identically. To see why, consider the highest-order terms.
Since $p(\lambda) = A(\lambda)+c$, these terms are
\beqn
&&(n\lambda^{n-1})(k^n\lambda^n)-(n\lambda^{n-1})(\lambda^n)
	-(nk^{n-1}\lambda^{n-1})(\lambda^n)(k-1)\\
&&	= \lambda^{2n-1}(-nk^{n-1}+n) = \lambda^{2n-1}n(1-k^{n-1})
\eeqn
The coefficient $n(1-k^{n-1})$ vanishes only when $k=1$, or $k=-1$
when $n$ is odd, but $k \neq \pm 1$.
So $Q$ does not vanish identically, as claimed. 

Therefore $Q$ has at most $2n-1$ zeros in $\C$.
Let the zeros be $\lambda_1 , \ldots, \lambda_{2n-1}$. The value of $\lambda_0$
must be one of the $\lambda_i$ for $1 \leq i \leq 2n-1$.
Now, for any choice of $c$, the equation
\[
c= -A(\lambda_i) 
\]
holds for some $i$. But the number of values of $A(\lambda_i)$ is finite,
so a small perturbation of the $b_i$ removes the double zero,
that is, this particular resonance.

Any finite (indeed, countable) 
number of small perturbations can be chosen so that they
combine to give a small perturbation, since $\eps/2+\eps/4+ \cdots = \eps$.
This result can therefore be used to remove all nontrivial resonances ($k \neq \pm1$).

\end{proof}

\end{document}